\date{\today}
\newcommand{\Z}{{\mathbb Z}}
\newcommand{\R}{{\mathbb R}}
\newcommand{\C}{{\mathbb C}}
\newcommand{\T}{{\mathbb T}}
\newcommand{\diam}{{\mathrm{diam}}}
\newtheorem{theorem}{Theorem}[section]
\newtheorem{lemma}[theorem]{Lemma}
\newtheorem{prop}[theorem]{Proposition}
\newtheorem{coro}[theorem]{Corollary}
\theoremstyle{definition}
\newtheorem*{definition}{Definition}
\newtheorem{remark}[theorem]{Remark}
\theoremstyle{plain}
\allowdisplaybreaks \numberwithin{equation}{section}
\DeclareMathOperator{\supp}{supp}
\begin{document}

\title[Schr\"odinger Operators with Ergodic+Random Potentials]{The Spectrum of Schr\"odinger Operators with Randomly Perturbed Ergodic Potentials}

\author[A.\ Avila]{Artur Avila}

\address{Institut f\"ur Mathematik, Universit\"at Z\"urich, Winterthurerstrasse 190, 8057 Z\"urich, Switzerland and IMPA, Estrada D. Castorina 110, Jardim Bot\^anico, 22460-320 Rio de Janeiro, Brazil}

\email{artur@math.sunysb.edu}

\author[D.\ Damanik]{David Damanik}

\address{Department of Mathematics, Rice University, Houston, TX~77005, USA}

\email{damanik@rice.edu}

\thanks{D.\ D.\ was supported in part by NSF grants DMS--1700131 and DMS--2054752}

\author[A.\ Gorodetski]{Anton Gorodetski}

\address{Department of Mathematics, University of California, Irvine, CA~92697, USA}

\email{asgor@uci.edu}

\thanks{A.\ G.\ was supported in part by NSF grant DMS--1855541.} 

\keywords{random Schr\"odinger operator, almost sure spectrum, rotation number}

\begin{abstract}
We consider Schr\"odinger operators in $\ell^2(\Z)$ whose potentials are given by the sum of an ergodic term and a random term of Anderson type. Under the assumption that the ergodic term is generated by a homeomorphism of a connected compact metric space and a continuous sampling function, we show that the almost sure spectrum arises in an explicitly described way from the unperturbed spectrum and the topological support of the single-site distribution. In particular, assuming that the latter is compact and contains at least two points, this explicit description of the almost sure spectrum shows that it will always be given by a finite union of non-degenerate compact intervals. The result can be viewed as a far reaching generalization of the well known formula for the spectrum of the classical Anderson model.
\end{abstract}

\maketitle

\section{Introduction}

In this paper we consider perturbations of ergodic Schr\"odinger operators in $\ell^2(\Z)$ by the addition of a random potential of Anderson type. We will for simplicity assume that both pieces of the potential are bounded.

It is known that with respect to the product measure, the spectrum is almost surely equal to the same set, which we will denote by $\Sigma_1$. A question of Bellissard asks whether it can be shown that $\Sigma_1$ has only finitely many gaps. Since, by general principles, $\Sigma_1$ cannot contain any isolated points, an equivalent formulation is the assertion that $\Sigma_1$ is given by a finite union of non-degenerate compact intervals.

It is in fact not obvious that in this generality, it is always true that $\Sigma_1$ even contains \emph{any} non-degenerate intervals, especially if the unperturbed ergodic model has a spectrum of Cantor type. This question, along with some preliminary results, was discussed and advertised in a recent paper by two of the authors \cite{DG22}.

The purpose of the present paper is to establish a full affirmative answer to Bellissard's question and prove the finiteness of the number of gaps of $\Sigma_1$ under the assumption that the hull of the ergodic piece is connected. This relatively weak assumption is satisfied by many popular models, including quasi-periodic potentials and potentials generated by skew-shifts and hyperbolic toral automorphisms. Beyond just the finiteness of the number of gaps of $\Sigma_1$, we even show how $\Sigma_1$ results in an explicit and simple way from the unpertubed almost sure spectrum $\Sigma_0$ and the topological support $S$ of the single-site measure $\nu$ generating the Anderson-type perturbation. This result in particular recovers the well known expression of the almost sure spectrum of the Anderson model, which in our setting corresponds to the case of a zero ergodic term.

\bigskip

Let us state our result precisely. The unperturbed model is given as follows. Given a compact metric space $X$, a homeomorphism $T : X \to X$, an ergodic Borel probability measure $\mu$ with full topological support, $\supp \mu = X$, and a sampling function $f \in C(X,\R)$, we generate potentials
$$
V_x(n) = f(T^n x), \; x \in X, \; n \in \Z
$$
and Schr\"odinger operators
$$
[H_x \psi](n) = \psi(n+1) + \psi(n-1) + V_x(n) \psi(n)
$$
in $\ell^2 (\Z)$. By the general theory of ergodic Schr\"odinger operators in $\ell^2(\Z)$, the spectrum of $H_x$, denoted by $\sigma(H_x)$, is almost surely independent of $x$. That is, there is a compact set $\Sigma_0$ such that
\begin{equation}\label{e.spectrumunpert}
\Sigma_0 = \sigma(H_x) \text{ for $\mu$-almost every } x \in X.
\end{equation}

The random perturbation is given by
$$
W_\omega(n) = \omega_n, \; \omega \in \Omega, \; n \in \Z,
$$
where $\Omega = (\supp \nu)^\Z$ and $\nu$ is a compactly supported probability measure on $\R$ with topological support
\begin{equation}\label{e.supportssm}
S := \supp \nu
\end{equation}
satisfying
$$
\# S \ge 2.
$$
Since the product of $\mu$ and $\tilde \mu := \nu^\Z$ is ergodic with respect to the product of $T$ and the left shift, there is, again by the general theory of ergodic Schr\"odinger operators in $\ell^2(\Z)$, a compact set $\Sigma_1$ such that
$$
\Sigma_1 = \sigma(H_x + W_\omega) \text{ for $\mu \times \tilde \mu$-almost every } (x,\omega) \in X \times \Omega.
$$
Since $\supp \mu \times \tilde \mu = X \times S^\Z$, we also have
\begin{equation}\label{e.spectrumpert}
\Sigma_1 = \bigcup_{(x,\omega) \in X \times S^\Z} \sigma(H_x + W_\omega),
\end{equation}
that is, the spectra corresponding to exceptional points can only be smaller than the almost sure spectrum. In particular,
\begin{equation}\label{e.specinclusion}
\sigma(H_x + W_\omega) \subseteq \Sigma_1 \text{ for every } (x,\omega) \in X \times S^\Z.
\end{equation}
For all the general results mentioned above and more background on ergodic Schr\"odinger operators, we refer the reader to \cite{D17, DF22b}.

Before stating our main theorem, we introduce the following operation on pairs of compact subsets of $\R$.

\begin{definition}
Suppose $A$ and $B$ are compact subsets of $\R$. We define the compact set $A \bigstar B$ as follows. If $\diam(A) \ge \diam(B)$, then $A \bigstar B = A + \mathrm{ch}(B)$, and if $\diam (A) < \diam (B)$, then $A \bigstar B = \mathrm{ch}(A) + B$. Here, $\diam(S)$ denotes the diameter and $\mathrm{ch}(S)$ denotes the convex hull of a compact $S \subset \R$, and $S_1 + S_2$ denotes the Minkowski sum $\{ s_1 + s_2 : s_1 \in S_1 , \, s_2 \in S_2 \}$.
\end{definition}

\begin{theorem}\label{t.main}
Consider the setting described above and assume that $X$ is connected. Then, we have
\begin{equation}\label{e.keyformula}
\Sigma_1 = \Sigma_0 \bigstar S.
\end{equation}
\end{theorem}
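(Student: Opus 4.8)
The plan is to prove the two inclusions $\Sigma_1 \subseteq \Sigma_0 \bigstar S$ and $\Sigma_0 \bigstar S \subseteq \Sigma_1$ separately, with the bulk of the work going into the second one. For the first inclusion, fix $(x,\omega) \in X \times S^\Z$ and note that $H_x + W_\omega$ is a bounded Jacobi-type operator whose potential is $f(T^n x) + \omega_n$. One expects to bound the spectrum from outside via the known structure of $\Sigma_0$ together with an elementary argument controlling how a bounded perturbation supported in $S$ can move the spectrum, upgraded by the "larger-diameter wins" bookkeeping that is exactly what $\bigstar$ encodes. Concretely, if $\diam(\Sigma_0) \ge \diam(S)$, one can write $H_x + W_\omega = (H_x + c) + (W_\omega - c)$ for $c = \min S$ (so $W_\omega - c \ge 0$ takes values in $S - c \subseteq [0,\diam S]$) and combine $\sigma(H_x + c) \subseteq \Sigma_0 + c$ with a sweep over the constant-shift family $t \mapsto H_x + t$, $t \in [0,\diam S]$, using the inclusion \eqref{e.specinclusion} and the fact that pointwise domination of potentials plus monotonicity of rank-one-type moves cannot escape $\Sigma_0 + \mathrm{ch}(S)$; the symmetric case is handled by reversing the roles. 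This half should be routine once the right decomposition is chosen.

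The substantive direction is $\Sigma_0 \bigstar S \subseteq \Sigma_1$, and here I would use the connectedness of $X$ in an essential way together with a continuity/rotation-number argument. The strategy is: (i) realize each point of $\Sigma_0$ as an energy at which, for a suitable $x$, the transfer-matrix cocycle over $(X,T)$ has a prescribed behavior (e.g.\ boundary of the spectrum, or an energy with a generalized eigenfunction / the rotation number taking a value making $E$ a spectral edge of some $H_x$ restricted to a long block), and (ii) glue on the random part by choosing $\omega$ to take only the two extreme values $s_- = \min S$, $s_+ = \max S$ on long blocks so that $W_\omega$ realizes, on arbitrarily long intervals, any constant in $[s_-, s_+]$; combined with Weyl sequences this shows $\Sigma_0 + \mathrm{ch}(S) \subseteq \Sigma_1$ when $\diam \Sigma_0 \ge \diam S$. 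The point where connectedness enters is to show one does not merely get $\Sigma_0 + S$ but the full $\Sigma_0 + \mathrm{ch}(S)$: because $X$ is connected, $\Sigma_0$ is itself forced to be "thick" in a way measured by the integrated density of states / rotation number, which is a continuous monotone surjection onto an interval; approximating $H_x$ on a long block by a periodic operator whose rotation number we can tune continuously lets us place a spectral edge of that block-operator at any prescribed energy in $\mathrm{ch}(\Sigma_0)$ that is compatible with the value of $\diam \Sigma_0$, and then adding a constant from $[s_-,s_+]$ on that same block produces a Weyl sequence at the desired energy. The symmetric regime $\diam \Sigma_0 < \diam S$ is handled dually: there the random block can be made to sweep out the interval $\mathrm{ch}(S)$ while only a single value of $\Sigma_0$ (realized on a long block) is needed, so one gets $\mathrm{ch}(\Sigma_0) + S$.

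I would carry this out in the order: first the outer inclusion via the decomposition trick; then establish the key "density" lemma that, for connected $X$, every point of $\mathrm{ch}(\Sigma_0)$ (within the relevant diameter budget) is an accumulation point of spectral edges of periodic approximants of the $H_x$; then the Weyl-sequence construction combining a long $\Sigma_0$-block with a long constant random block; and finally the diameter case-split to assemble $\Sigma_0 \bigstar S$. I expect the main obstacle to be step (ii) of the inner inclusion in the regime $\diam \Sigma_0 \ge \diam S$: controlling, uniformly in the block length, how inserting a constant $t \in \mathrm{ch}(S)$ on a finite window of an ergodic potential moves a spectral edge, and ruling out that interference between the two blocks destroys the approximate eigenfunction. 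This is where one must be careful that it is really $\diam \Sigma_0$ (via connectedness) and not some smaller quantity that governs how far into $\mathrm{ch}(S)$ one can push — i.e.\ the asymmetry built into the definition of $\bigstar$ has to be matched exactly, with no slack, by the construction.
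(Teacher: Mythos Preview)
Your outer inclusion $\Sigma_1 \subseteq \Sigma_0 \bigstar S$ is essentially the paper's: after centering $\Sigma_0$ one has $\|H_x\|=r$, hence $\sigma(H_x+W_\omega)\subseteq S+[-r,r]=S+\mathrm{ch}(\Sigma_0)$; symmetrically $\sigma(H_x+W_\omega)\subseteq \Sigma_0+\mathrm{ch}(S)$. The inner inclusion, however, has a genuine gap. Both building blocks you rely on are false: (a) $W_\omega$ cannot ``realize, on arbitrarily long intervals, any constant in $[s_-,s_+]$'' --- every $\omega_n$ lies in $S$, so no block of $W_\omega$ ever equals a constant in $\mathrm{ch}(S)\setminus S$; and (b) your ``density lemma'' fails because on every gap of $\Sigma_0$ the cocycle $(T,A_E)$ is uniformly hyperbolic, so spectra of block/periodic approximants of $H_x$ accumulate only on $\Sigma_0$, never on points of $\mathrm{ch}(\Sigma_0)\setminus\Sigma_0$. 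Thus in the decisive case $E-S\subseteq\Sigma_0^c$ (for instance $\Sigma_0=[0,1]\cup[5,6]\cup[9,10]$, $S=\{0,3\}$, $E=7.5$, where $E-S=\{4.5,7.5\}$ sits in two distinct gaps) there is no $s\in S$ with $E-s\in\Sigma_0$ and no approximant produces spectrum near $E$, so your Weyl-sequence scheme cannot start.

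The paper's mechanism is different in kind and is the missing idea. When $E-S\subseteq\Sigma_0^c$ but $E-S$ is not contained in a single gap of $\Sigma_0$ (and $\Sigma_0$ is not contained in a single gap of $E-S$), pick $v,v'\in S$ with $E-v$ and $E-v'$ in \emph{different} gaps of $\Sigma_0$, one of them bounded. Connectedness of $X$ enters through the rotation number: since one gap is bounded, $\rho(E-v)\ne\rho(E-v')\bmod 1$, and this forces the unstable sections $d^u_{E-v},\,d^u_{E-v'}:X\to\T$ to be non-homotopic. Since the stable and unstable sections at a fixed energy are always homotopic (they never meet, by \eqref{e.stableunstable}), $d^u_{E-v}$ and $d^s_{E-v'}$ are non-homotopic; by the contrapositive of ``two continuous maps $X\to\T$ that never agree are homotopic'', they coincide at some $x\in X$. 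At that $x$, the realization $\omega_n=v$ on $\Z_-$ and $\omega_n=v'$ on $\Z_+$ yields an exponentially decaying eigenfunction of $H_x+W_\omega$ at energy $E$, so $E\in\Sigma_1$ via \eqref{e.specinclusion}. No approximation is involved; the construction is exact, and this ``unstable-meets-stable'' matching, not any sweeping of spectral edges, is where connectedness of $X$ does its work.
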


This theorem provides an affirmative answer to Bellissard's question:

\begin{coro}\label{c.main}
If $X$ is connected, then the almost sure spectrum $\Sigma_1$ is given by a finite union of non-degenerate compact intervals.
\end{coro}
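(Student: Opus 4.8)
The plan is to deduce the corollary directly from Theorem \ref{t.main}, so the only thing to establish is that the set $\Sigma_0 \bigstar S$ is a finite union of non-degenerate compact intervals. First I would recall that, by general principles of ergodic Schr\"odinger operators, $\Sigma_0$ is a nonempty compact subset of $\R$ with no isolated points; in particular $\diam(\Sigma_0) > 0$, so $\mathrm{ch}(\Sigma_0)$ is a non-degenerate compact interval. Similarly, the hypothesis $\# S \ge 2$ together with compactness of $S$ gives $\diam(S) > 0$ and $\mathrm{ch}(S)$ a non-degenerate compact interval. Now split into the two cases of the definition of $\bigstar$.

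In the case $\diam(\Sigma_0) \ge \diam(S)$, we have $\Sigma_1 = \Sigma_0 + \mathrm{ch}(S)$. Since $\mathrm{ch}(S) = [\,\inf S, \sup S\,]$ is a non-degenerate closed interval of some length $\ell > 0$, the Minkowski sum $\Sigma_0 + [0,\ell]$ equals $\bigcup_{t \in \Sigma_0} [t, t+\ell]$, which is a union of closed intervals each of length $\ell$; any two such intervals whose base points are within distance $\ell$ merge, so the union is a finite disjoint union of closed intervals, each of length at least $\ell > 0$, because $\Sigma_0$ is bounded. (Finiteness: the left endpoints of the connected components of $\Sigma_0 + [0,\ell]$ are pairwise at distance $> \ell$ and lie in a bounded set.) In the case $\diam(\Sigma_0) < \diam(S)$, we have $\Sigma_1 = \mathrm{ch}(\Sigma_0) + S$; but here $\diam(\mathrm{ch}(\Sigma_0)) = \diam(\Sigma_0) > 0$, so $\mathrm{ch}(\Sigma_0)$ is itself a non-degenerate closed interval, and the same argument with the roles reversed shows $S + \mathrm{ch}(\Sigma_0)$ is a finite union of non-degenerate compact intervals.

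In either case $\Sigma_1$ is a finite union of non-degenerate compact intervals, which is exactly the assertion; equivalently, $\Sigma_1$ has only finitely many gaps. I do not anticipate a genuine obstacle here: once Theorem \ref{t.main} is in hand, the corollary is an elementary exercise about Minkowski sums of a compact set with a non-degenerate interval, and the only mild care needed is the finiteness count for the number of connected components, which follows from boundedness of $\Sigma_0$ (equivalently of $\Sigma_1$).
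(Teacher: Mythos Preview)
Your proposal is correct and follows exactly the approach of the paper: the authors simply state that the corollary ``is an immediate consequence of \eqref{e.keyformula},'' and you have merely spelled out the elementary details of why $\Sigma_0 \bigstar S$ is a finite union of non-degenerate compact intervals. Your treatment of the two cases and the finiteness count is fine, and your remark that $\diam(\Sigma_0)>0$ (needed in the second case) follows from the absence of isolated points in the almost sure spectrum is the right justification.
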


\begin{proof}
This is an immediate consequence of \eqref{e.keyformula}.
\end{proof}

\begin{remark}
The standard Anderson model (see, e.g., \cite{K08, S11} for some introductory texts) arises in our setting if we set $V_x \equiv 0$ for all $x \in X$ (which can be accomplished by choosing the zero sampling function $f \in C(X,\R)$), which yields $\Sigma_0 = [-2,2]$. For the Anderson model, it is a classical result due to Kunz and Souillard \cite{KS80} (see also, e.g., \cite[Theorem~4.1]{D17}, \cite[Theorem~3.9]{K08}, \cite[Proposition~3.3]{S22}, and \cite[Theorem~2]{S11}) that
\begin{equation}\label{e.classicalformula}
\Sigma_1 = [-2, 2]+\supp \nu= \Sigma_0 + S.
\end{equation}
We remark here that \eqref{e.keyformula} recovers, and indeed vastly generalizes, \eqref{e.classicalformula}. To verify this, consider the two cases in question. If $\diam(S) \le 4 = \diam(\Sigma_0)$, then $\Sigma_1 = \Sigma_0 \bigstar S = [-2,2] + \mathrm{ch}(S) = \Sigma_0 + S$, where the last step follows from $\diam(S) \le 4$. On the other hand, if $\diam(S) > 4$, then $\Sigma_1 = \Sigma_0 \bigstar S = \mathrm{ch}([-2,2]) + S = \Sigma_0 + S$.
\end{remark}

\begin{remark}
In \cite{DG22} it was shown that for \emph{some} quasi-periodic potentials $\{V_x\}$, $\Sigma_1$ \emph{contains an} interval. Due to Corollary~\ref{c.main}, we now know that for \emph{all} quasi-periodic potentials $\{V_x\}$, $\Sigma_1$ is given by \emph{a finite union of non-degenerate compact intervals}. Beyond quasi-periodic potentials, which are generated by minimal translations on a finite-dimensional torus, our result covers other ergodic maps on finite-dimensional tori with fully supported ergodic measure. This includes, for example, (generalized) skew-shifts, for which it is also known that $\Sigma_0$ is generically nowhere dense \cite{ABD09, ABD12} and hence the topological structure of the almost sure spectrum changes markedly in all these cases when the random perturbation is turned on.
\end{remark}

\begin{remark}
Let us give an example showing that the formula \eqref{e.keyformula} may fail when $X$ is not assumed to be connected. Consider the case where the unperturbed potential is $2$-periodic, that is, we choose $X = \Z_2 = \Z/2\Z$, $T x = x+1$, $\mu = \frac12 \delta_0 + \frac12 \delta_1$. The perturbation is given by the Bernoulli-Anderson model, that is, $S$ has cardinality $2$. By the general theory of periodic Schr\"odinger operators it follows that for any $f:X\to \mathbb{R}$, the corresponding spectrum $\Sigma_0$ has diameter at least four and either is an interval or a union of two intervals. Therefore, if $\diam(S) \le 4$, $\Sigma_0 \bigstar S$ can have at most two connected components. However, explicit calculations (based on, e.g., \cite[Theorem~3.3.1]{DFG} or \cite[Lemma~2]{W}) show that for $f(0)=0$, $f(1)=7$, $S=\{-1, 1\}$, we have
$$
\Sigma_1=\left[\frac{5-\sqrt{65}}{2}, -1\right]\cup \left[\frac{7-\sqrt{41}}{2}, 1\right]\cup \left[6, \frac{7+\sqrt{41}}{2}\right]\cup \left[8, \frac{9+\sqrt{65}}{2}\right],
$$
which has four components and hence does not coincide with $\Sigma_0 \bigstar S$. We also mention that the topological structure of $\Sigma_1$ for operators of this kind was recently studied in \cite{DFG, W}, and it was shown that it is always a union of at most four intervals (for any values of $f(0), f(1)$, and any two-point set $S$).
\end{remark}

\begin{remark}
Some open questions about the topological structure of $\Sigma_1$ in the case of a disconnected $X$, which includes the case of the Anderson model with a periodic background, or about the topological structure of the essential spectrum in the case of the non-stationary Anderson model are formulated in \cite{DG22}, and we refer the reader to that paper for a more detailed discussion of them.
\end{remark}

\begin{remark}
While the case of an Anderson-type perturbation of a given ergodic reference operator is very natural, let us remark that Theorem~\ref{t.main} extends to a more general class of ergodic measures on $X \times S^\Z$. It is not necessary to consider the product $\mu \times \tilde \mu$. Moreover, it is not necessary to consider the product measure $\tilde \mu = \nu^\Z$ on $S^\Z$. The formula for $\Sigma_1$ holds whenever it is the almost sure spectrum associated with an ergodic measure on $X \times S^\Z$ that has full support. This can be seen either by inspection of the proof (which extends to measures of this kind) or by an application of the semi-continuity property of the spectrum with respect to strong convergence, which ensures that the almost sure spectrum of an ergodic family of Schr\"odinger operators is completely determined by the topological support of the push-forward measure on the space of realizations; compare \cite[Theorem~4.8.8]{DF22b}. For example, in the case when $T : X \to X$ is a uniquely ergodic homeomorphism of zero entropy (e.g., an irrational circle rotation), one can replace $\mu \times \tilde \mu$ by an ergodic measure on $X \times S^\Z$ with full support and zero entropy. The existence of such a measure follows, for example, from the following argument. Let $\mu_1$ be the unique invariant measure of the map $T:X\to X$. Let $\mu_2$ be an invariant ergodic measure with full support and zero entropy for the left shift $S^{\Z}\to S^{\Z}$; there is a residual set of such measures in the space of invariant measures of the left shift, see \cite{O63, Si70, Si71}.  Take $\mu_{1,2}$ to be an ergodic joining of the measures $\mu_1$ and $\mu_2$. Since both $\mu_1$ and $\mu_2$ have zero entropy, $\mu_{1,2}$ is also a measure of zero entropy. Finally, $\mu_{1,2}$ must have full support, $\text{\rm supp}\, \mu_{1,2}=X\times S^{\Z}$. Indeed, since $\text{\rm supp}\,\mu_2=S^{\Z}$, an $\omega$-limit set (with respect to the left shift) of any $\mu_2$-regular point contains every periodic point $p$ of the left shift. Together with the unique ergodicity of $T$, this implies that the $\omega$-limit set (with respect to the product of $T$ and the left shift) of any $\mu_{1,2}$-regular point must contain the whole leaf $X\times \{p\}$. Since such periodic leaves are dense in $X\times S^{\Z}$, we must have $\text{\rm supp}\, \mu_{1,2}=X\times S^{\Z}$. 
\end{remark}

The remainder of the paper is structured as follows. Theorem~\ref{t.main} will be proved in Section~\ref{sec.proof}, after having discussed, in Section~\ref{sec.prelim}, some ingredients used in the proof.

\section{Cocycles, Invariant Sections, and the Rotation Number}\label{sec.prelim}


%
%

In this section we discuss some objects that will play a crucial role in the proof of Theorem~\ref{t.main}. They are centered around, and related in spirit to, the Johnson-Schwartzman approach to gap labelling, but we will restrict our discussion to those aspects that are needed in the proof. For the results mentioned below, as well as more details and background, we refer the reader to \cite{DF22a, DF22b, J86, JONNF}.

Recall the framework that defines the unperturbed potentials: $X$ is a compact metric space, $T : X \to X$ is a homeomorphism, $\mu$ is an ergodic probability measure with full support, and $f : X \to \R$ is a continuous sampling function. We associate with these model data a one-parameter family of $\mathrm{SL}(2,\R)$ cocycles as follows: for $E \in \R$, we let
\begin{equation}\label{e.cocycle1}
A_E : X \to \mathrm{SL}(2,\R), \; A_E(x)=\begin{pmatrix}
           E-f(x) & -1 \\
           1 & 0 \\
         \end{pmatrix},
\end{equation}
and consider the cocycle
\begin{equation}\label{e.cocycle2}
(T,A_E) : X \times \mathbb{R}^2 \to X \times \mathbb{R}^2, (x,v) \mapsto (T x , A_E(x) v).
\end{equation}
Iterating the cocycle, we obtain maps $A_E^n : X \to \mathrm{SL}(2,\R)$ for $n \in \Z$ so that $(T,A_E)^n = (T^n, A_E^n)$. One says that $(T,A_E)$ is \emph{uniformly hyperbolic} if there are $C > 0$, $\lambda > 1$ such that $\inf_{x \in X} \|A_E^n(x)\| \ge C \lambda^{|n|}$. Since the ergodic measure $\mu$ has full support, the unperturbed almost sure spectrum can be characterized as follows:
\begin{equation}\label{e.johnsonstheorem}
\Sigma_0 = \{ E \in \R : (T,A_E) \text{ is not uniformly hyperbolic} \}.
\end{equation}

It is often convenient to consider the projectivization of $A_E(x)$ and regard it as a map from $\mathbb{RP}^1$ to $\mathbb{RP}^1$. Upon the natural identification of $\mathbb{RP}^1$ with $\T = \R/\Z$, we then arrive at a map from $\T$ to $\T$, which we denote by $g_E(x)$. An \emph{invariant section} of the cocycle $(T,A_E)$ is a continuous map $d : X \to \T$ such that for every $x \in X$, we have $g_E(x)(d(x)) = d(T x)$. If the cocycle $(T,A_E)$ is uniformly hyperbolic, it is well known that there are two invariant sections $d^s_E, d^u_E : X \to \T$, for which the associated vectors in $\R^2$ experience exponential decay in forward (resp., backward) time under iterations of the cocycle. These are called the \emph{stable} (rep., \emph{unstable}) section. We have
\begin{equation}\label{e.stableunstable}
d^s_E(x) \not= d^u_E(x) \text{ for every } x \in X,
\end{equation}
consistent with \eqref{e.johnsonstheorem}.

Since the cocycle $(T, A_E)$ is homotopic to the constant cocycle $(T,I)$ with the identity matrix $I$ (which is usually referred to as the cocycle being \emph{homotopic to the identity}), one can choose lifts $\tilde g_E(x) : \mathbb{R} \to \mathbb{R}$ (i.e., $\pi(\tilde g_E(x)(y)) = g_E(x)(\pi(y))$ with the canonical projection $\pi : \R \to \T$) that are continuous in both $x \in X$ and $E \in \mathbb{R}$. If $X$ is connected, any two such families of lifts must be the same up to an additive integer constant.

For $n \in \Z_+$, consider the composition 
$\tilde G_{x, E, n}= \tilde g_E(T^{n-1}(x)) \circ \ldots \circ \tilde g_E(x)$. Due to  \cite{ABD12, DF22a, DF22b,  J86, JONNF} and \cite[Theorem A.9]{GK21} we have the following:

\begin{prop}\label{p.rotationnumberdefandprops}
For any $E\in \R$, the rotation number
$$
\rho(E)=\lim_{n\to \infty}\int_X\frac{\tilde G_{x, E, n}(y)}{n}d\mu(y)
$$
exists. Moreover, for $\mu$-a.e. $x\in X$ and any $y\in \mathbb{R}$, we have
$$
\frac{\tilde G_{x, E, n}(y)}{n}\to \rho(E)\ \ \text{as}\ \ \ n\to \infty.
$$
The function $E \mapsto \rho(E)$ is continuous and monotone. Moreover, it is constant on an interval $(E_1, E_2)$ if and only if $(T,A_E)$ is uniformly hyperbolic for every $E\in (E_1, E_2)$.
\end{prop}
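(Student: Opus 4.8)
The plan is to establish the successive assertions in turn, using throughout that for each fixed $x$ and $E$ the map $\tilde g_E(x)$ is a lift of an orientation-preserving circle homeomorphism, hence strictly increasing with $\tilde g_E(x)(y+1)=\tilde g_E(x)(y)+1$; the same then holds for every composition $\tilde G_{x,E,n}$, and in particular $\vert\tilde G_{x,E,n}(y)-\tilde G_{x,E,n}(0)-y\vert\le 1$ for all $y\in\R$, so $\tilde G_{x,E,n}(y)/n$ has the same asymptotics as $\psi_n(x):=\tilde G_{x,E,n}(0)$ regardless of the base point $y$. From the cocycle identity $\tilde G_{x,E,n+m}=\tilde G_{T^m x,E,n}\circ\tilde G_{x,E,m}$ together with this bound one gets $\vert\psi_{n+m}(x)-\psi_n(T^m x)-\psi_m(x)\vert\le 1$, so that $\psi_n+1$ and $1-\psi_n$ are both subadditive over $T$. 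Since each $\psi_n$ is continuous, hence bounded, on the compact space $X$, Kingman's subadditive ergodic theorem applies to both sequences and, with the ergodicity of $\mu$, produces a constant $\rho(E)$ with $\psi_n(x)/n\to\rho(E)$ for $\mu$-a.e.\ $x$; the bound $\vert\psi_n(x)/n\vert\le\sup_{x'}\sup_y\vert\tilde g_E(x')(y)-y\vert$ and dominated convergence then give $\tfrac1n\int_X\psi_n\,d\mu\to\rho(E)$ as well, which is the stated integral formula.

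For continuity of $\rho$ in $E$: the error constant in the almost-additivity of $\psi_n$ is the \emph{universal} bound $1$, so a standard Fekete argument gives $\vert\tfrac1n\int_X\psi_n\,d\mu-\rho(E)\vert\le\tfrac1n$ uniformly in $E$; since for each fixed $n$ the map $E\mapsto\tfrac1n\int_X\tilde G_{x,E,n}(0)\,d\mu(x)$ is continuous (the integrand being jointly continuous in $(x,E)$ and locally bounded), $\rho$ is a locally uniform limit of continuous functions, hence continuous. For monotonicity of $\rho$ in $E$: a short computation with the explicit matrix \eqref{e.cocycle1} shows that the derivative in $E$ of the induced circle map is everywhere of one sign --- it equals $-\cos^2\theta$ up to a positive factor in angular coordinates, the sign originating from the special form of $A_E(x)$, whose bottom row $(1\ 0)$ does not depend on $E$. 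Hence each $\tilde G_{x,E,n}$, each $\psi_n$, and in the limit $\rho$, is monotone in $E$.

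There remains the characterization of intervals of constancy. If $(T,A_E)$ is uniformly hyperbolic for all $E$ in an interval $I$, then (cf.\ the discussion around \eqref{e.stableunstable}) there is a continuous invariant section $d_E^u:X\to\T$, and since $X$ is connected one may pick a lift $\tilde d_E:X\to\R$ with $\tilde g_E(x)(\tilde d_E(x))=\tilde d_E(Tx)+k(E)$ for a single integer $k(E)$; iterating and dividing by $n$, and using that $\tilde d_E$ is bounded, yields $\rho(E)=k(E)\in\Z$. Since $d_E^u$, hence $\tilde d_E$, hence $k(E)$, varies continuously with $E$ on the uniformly hyperbolic locus, $k(E)$ is locally constant there; together with the continuity of $\rho$ and the connectedness of $I$ this forces $\rho$ to be constant on $I$. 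For the converse --- that constancy of $\rho$ on $(E_1,E_2)$ forces uniform hyperbolicity throughout $(E_1,E_2)$ --- I would invoke the identification (up to an affine normalization fixed by the conventions in use) of $\rho$ with the integrated density of states of the family $\{H_x\}$, a standard fact in the spectral theory of ergodic Schr\"odinger operators (see, e.g., \cite{DF22b}): constancy of $\rho$ means the density-of-states measure gives no mass to $(E_1,E_2)$, and since the support of that measure equals $\Sigma_0$, we get $(E_1,E_2)\cap\Sigma_0=\emptyset$, which by \eqref{e.johnsonstheorem} is exactly uniform hyperbolicity of $(T,A_E)$ for every $E\in(E_1,E_2)$. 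Equivalently, this last implication is Johnson's theorem connecting constancy of the rotation number to exponential dichotomy \cite{J86}.

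The step I expect to be the main obstacle is precisely this last implication: showing that constancy of $\rho$ \emph{forces} uniform hyperbolicity is the nontrivial half of Johnson-type gap labelling, and it is not accessible by a bare-hands argument with invariant sections --- producing a single continuous invariant section at an energy $E_0\in\Sigma_0$ does not exclude the borderline (parabolic or elliptic) behavior that occurs in the spectrum --- so one really wants the density-of-states picture (or Johnson's dichotomy theorem). The other three assertions are comparatively soft, relying only on the lift structure of the $\tilde g_E(x)$, the subadditive ergodic theorem, and the explicit form of $A_E$.
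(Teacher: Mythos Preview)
The paper does not prove this proposition; it simply imports the statement from the cited references \cite{ABD12, DF22a, DF22b, J86, JONNF} and \cite[Theorem~A.9]{GK21}. So there is no in-paper argument to compare against, and your sketch already supplies more than the paper does.

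Most of your outline is sound. The almost-additivity of $\psi_n$ together with a Kingman-type theorem gives existence and a.e.\ convergence; your phrasing ``$\psi_n+1$ and $1-\psi_n$ are both subadditive'' is not literally correct (adding a constant does not repair the additive defect), but the standard fix---e.g.\ passing to $a_n(x)=\sup_y\bigl(\tilde G_{x,E,n}(y)-y\bigr)$, which is genuinely subadditive and differs from $\psi_n$ by at most $1$---is routine. Your continuity and monotonicity arguments are fine, and deferring the implication ``$\rho$ constant $\Rightarrow$ uniform hyperbolicity'' to the IDS/Johnson picture is exactly what the cited references do.

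There is, however, a real gap in your argument for the direction ``UH on $I\Rightarrow\rho$ constant on $I$''. You assert that ``since $X$ is connected one may pick a lift $\tilde d_E:X\to\R$'' of the unstable section $d_E^u:X\to\T$. Connectedness of $X$ does \emph{not} guarantee such a lift; that requires $d_E^u$ to be null-homotopic. For $X=\T$ with an irrational rotation by $\alpha$ and a sampling function producing an interior spectral gap, the unstable section in that gap typically has nonzero degree and admits no continuous lift; correspondingly the rotation number there is a non-integer element of $\Z+\alpha\Z$ (this is precisely gap labelling), flatly contradicting your conclusion $\rho(E)=k(E)\in\Z$. A correct argument avoids lifting $d_E^u$ and instead uses the displacement function $\varphi_{d_E^u,A_E}$ as in Proposition~\ref{p.1}: one shows that $\int_X\varphi_{d,A_E}\,d\mu$ depends only on the homotopy class of the invariant section $d$ (this is the mechanism behind Propositions~\ref{p.3}--\ref{p.homotopicsamegap} later in the paper), and since $d_E^u$ varies continuously in $E$ on each UH component, its homotopy class---and hence $\rho(E)$---is constant there. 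Ironically, the step you flagged as the main obstacle (the converse) is handled correctly by citation, while the step you treated as soft contains the actual error.
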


\begin{remark}
Notice that since the lifts could be shifted by the same integer constant, in the case of a connected $X$, the rotation number is only defined up to an integer.
\end{remark}



In the proof of Theorem~\ref{t.main} we will crucially rely on the following statement, whose proof will be given at the end of this section:

\begin{prop}\label{p.keyprop}
Assume that $X$ is connected. Suppose $E_1, E_2 \in \R$ are such that $E_1 < E_2$ and the cocycles $(T, A_{E_1})$ and $(T, A_{E_2})$ are uniformly hyperbolic. Then either $[E_1, E_2]\cap \Sigma_0=\emptyset$, or $\Sigma_0\subset (E_1, E_2)$, or the unstable sections $d^{u}_{E_1}$ and $d^{u}_{E_2}$ are not homotopic.
\end{prop}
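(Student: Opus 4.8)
The plan is to argue by contradiction: assume $[E_1,E_2]\cap\Sigma_0\neq\emptyset$, that $\Sigma_0\not\subset(E_1,E_2)$, and that $d^u_{E_1}$ and $d^u_{E_2}$ \emph{are} homotopic, and derive that $\rho(E_1)=\rho(E_2)$ modulo $\Z$ in a way that forces $\rho$ to be constant on $[E_1,E_2]$, contradicting Proposition~\ref{p.rotationnumberdefandprops}. The key observation is that the rotation number can be computed by tracking how far the lift of the dynamics moves the lift of the unstable (or stable) section. Concretely, since $(T,A_{E_i})$ is uniformly hyperbolic, $d^u_{E_i}$ is an invariant section, so $g_{E_i}(x)(d^u_{E_i}(x))=d^u_{E_i}(Tx)$; choosing a continuous lift $\widetilde{d^u_{E_i}}:X\to\R$ (possible because $X$ is connected and $d^u_{E_i}$ is homotopic to a constant precisely when $\widetilde{d^u_{E_i}}$ can be chosen continuously and single-valued — I will need to handle the homotopy class carefully here), one gets
\[
\tilde g_{E_i}(x)\bigl(\widetilde{d^u_{E_i}}(x)\bigr)=\widetilde{d^u_{E_i}}(Tx)+k_i(x)
\]
for an integer $k_i(x)$, which by continuity and connectedness of $X$ is a constant integer $k_i$. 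Iterating, $\tilde G_{x,E_i,n}(\widetilde{d^u_{E_i}}(x))=\widetilde{d^u_{E_i}}(T^nx)+nk_i$, and since $\widetilde{d^u_{E_i}}$ is bounded, dividing by $n$ and letting $n\to\infty$ gives $\rho(E_i)=k_i$.

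Next I would show that if $d^u_{E_1}$ and $d^u_{E_2}$ are homotopic, then $k_1=k_2$, hence $\rho(E_1)=\rho(E_2)$. The point is that a homotopy $H:[1,2]\times X\to\T$ from $d^u_{E_1}$ to $d^u_{E_2}$ lets one lift coherently, so that the winding-number data $k_i$ — which is a homotopy invariant of the section relative to the fixed continuous family of lifts $\tilde g_E$ — cannot jump. (Here the fact from the preamble that the family $\tilde g_E(x)$ is continuous jointly in $x$ and $E$, and unique up to a global integer constant when $X$ is connected, is what makes $k_i$ well-defined and comparable across different $E$.) So $\rho(E_1)=\rho(E_2)$. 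Since $\rho$ is monotone (Proposition~\ref{p.rotationnumberdefandprops}) it is then constant on $[E_1,E_2]$, so $(T,A_E)$ is uniformly hyperbolic for every $E\in(E_1,E_2)$, i.e.\ $(E_1,E_2)\cap\Sigma_0=\emptyset$. Combined with uniform hyperbolicity at the endpoints $E_1,E_2$ themselves, this yields $[E_1,E_2]\cap\Sigma_0=\emptyset$, which we have excluded — unless the gap $(E_1,E_2)$ in which $\rho$ is locally constant is actually the \emph{unbounded} complementary component, i.e.\ the value $k_1=k_2$ is the extreme (top or bottom) value of $\rho$ on $\R$. This is precisely the alternative $\Sigma_0\subset(E_1,E_2)$: if $\rho$ takes this constant value on all of $[E_1,E_2]$ and this is the maximal (resp.\ minimal) value, then the whole spectrum lies below $E_1$ or above $E_2$; more carefully, one of the endpoints must lie on the far side of $\Sigma_0$, and since both endpoints carry the same rotation number and $\Sigma_0$ is where $\rho$ is non-constant, $\Sigma_0$ is squeezed strictly between them. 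Thus all three cases are accounted for.

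The main obstacle I anticipate is making rigorous the claim that \emph{homotopic unstable sections have equal rotation numbers}, i.e.\ that the integer $k_i$ computed above is genuinely a homotopy invariant and is compatible across the parameter $E$. One has to be careful that $\rho(E)$ is only defined mod $\Z$ (as the remark after Proposition~\ref{p.rotationnumberdefandprops} stresses), so ``$\rho(E_1)=\rho(E_2)$'' must be interpreted via a \emph{single} fixed choice of the continuous-in-$E$ lift family $\tilde g_E$; the homotopy of sections then pins down the representative. I would make this precise by using the joint continuity of $\tilde g_E(x)$ to transport a lift of $d^u_{E_1}$ along a path of uniformly hyperbolic parameters — but since $(E_1,E_2)$ need not consist of uniformly hyperbolic parameters, the cleaner route is to work directly with the given homotopy $H$ between the two sections at the fixed parameters and verify that $\tilde g_{E_i}$-translation number is locally constant along $H$, using that $k_i(x)$ is continuous and integer-valued on the connected space $[1,2]\times X$. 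Everything else (invariance of the section, the telescoping computation of $\rho$, the monotonicity input) is routine given the results already quoted.
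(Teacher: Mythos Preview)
Your overall strategy coincides with the paper's: show that homotopic unstable sections force $\rho(E_1)\equiv\rho(E_2)\pmod{\Z}$, then combine this with the monotonicity/constancy properties of $\rho$ from Proposition~\ref{p.rotationnumberdefandprops} to obtain the trichotomy. However, two steps in your execution do not go through as written.

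First, the computation $\rho(E_i)=k_i$ via a continuous lift $\widetilde{d^u_{E_i}}:X\to\R$ presupposes that $d^u_{E_i}:X\to\T$ is nullhomotopic; otherwise no continuous lift exists (take $X=\T$ and a section of nonzero degree). You flag this in a parenthesis but then proceed as if the lift is available. The paper sidesteps the issue by using the displacement function $\varphi(x)=\tilde g_{E_i}(x)(y)-y$ for \emph{any} $y\in\pi^{-1}(d^u_{E_i}(x))$ (Proposition~\ref{p.1}), and then showing that $\rho$ is unchanged mod $\Z$ when one conjugates by a continuous family of rotations so as to carry one invariant section to the other (Propositions~\ref{p.3}, \ref{p.2}, \ref{p.homotopicsamegap}); this is exactly where the homotopy hypothesis is spent, since homotopic maps $X\to\T$ differ by a map that lifts to $\R$.

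Second, and more importantly, the assertion ``homotopic sections $\Rightarrow k_1=k_2$'' (equality in $\R$, not merely mod $\Z$) is not justified and is in general false: already for $X$ a point the sections on the two unbounded gaps are trivially homotopic while $\rho$ differs by a nonzero integer. Your ``cleaner route'' proposes to track an integer along the homotopy $H_t$, but for $t\in(1,2)$ the map $H_t$ is not an invariant section of any $(T,A_{E_i})$, so $\tilde g_{E_i}(x)(\widetilde{H_t}(x))-\widetilde{H_t}(Tx)$ is not integer-valued and the ``continuous integer on $[1,2]\times X$'' argument collapses. What homotopy actually yields is only $\rho(E_1)\equiv\rho(E_2)\pmod{\Z}$. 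To finish one needs the additional, standard fact for Schr\"odinger cocycles that the total variation of $\rho$ over $\R$ equals $1$; then $\rho(E_2)-\rho(E_1)\in\{0,1\}$, and the value $0$ gives $[E_1,E_2]\cap\Sigma_0=\emptyset$ while the value $1$ forces $\Sigma_0\subset(E_1,E_2)$. Your closing paragraph gropes toward this dichotomy but conflates ``$\rho$ constant on $[E_1,E_2]$'' with ``$\rho(E_1)\equiv\rho(E_2)\bmod\Z$,'' which is why the case split reads incoherently. The paper is admittedly terse at this same juncture (it simply invokes Propositions~\ref{p.rotationnumberdefandprops} and~\ref{p.homotopicsamegap}), but its explicit case analysis --- same gap; two unbounded gaps; different gaps with one bounded --- makes the role of the total-variation bound transparent.
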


The next proposition does not assume the connectedness of $X$:

\begin{prop}\label{p.homotopic}
Suppose $d^1 : X \to \mathbb{T}$ and $d^2 : X \to \mathbb{T}$ are continuous. If we have $d^1(x) \ne d^2(x)$ for all $x\in X$, then $d^1$ and $d^2$ are homotopic.
\end{prop}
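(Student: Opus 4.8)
The plan is to reduce to the elementary fact that a continuous map from any space into $\mathbb{T}$ whose image omits a point is null-homotopic. First I would form the ``difference map'' $e : X \to \mathbb{T}$ defined by $e(x) = d^1(x) - d^2(x)$, using the abelian group structure of $\mathbb{T} = \R/\Z$. This map is continuous, and the hypothesis $d^1(x) \ne d^2(x)$ for all $x$ says precisely that $e(x) \ne 0$ for every $x \in X$, so $e$ takes all its values in the open arc $\mathbb{T} \setminus \{0\}$. Since the canonical projection $\pi : \R \to \mathbb{T}$ restricts to a homeomorphism of $(0,1)$ onto $\mathbb{T} \setminus \{0\}$, there is a unique continuous lift $\tilde e : X \to (0,1)$ with $\pi \circ \tilde e = e$.

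Next I would write down the homotopy $F : X \times [0,1] \to \mathbb{T}$ given by $F(x,t) = d^2(x) + \pi\big((1-t)\,\tilde e(x)\big)$. This is continuous (a sum and composition of continuous maps), and it satisfies $F(\cdot, 0) = d^2 + \pi\circ\tilde e = d^2 + e = d^1$ as well as $F(\cdot, 1) = d^2 + \pi(0) = d^2$. Hence $d^1$ and $d^2$ are homotopic, which is the assertion.

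The only point that requires a moment's care is that, although $e$ itself never takes the value $0$, the curve $t \mapsto \pi((1-t)\tilde e(x))$ in $\mathbb{T}$ is allowed to pass through $0$; one is merely exploiting that the value set $(0,1)$ of the lift $\tilde e$ is convex in $\R$, so the straight-line homotopy of $\tilde e$ to the zero function stays in $\R$ and projects to a legitimate homotopy in $\mathbb{T}$. Accordingly, there is essentially no obstacle: the single substantive step is the construction of the continuous lift $\tilde e$, and this is exactly where the hypothesis $d^1 \ne d^2$ is used, since it forces $e$ to miss a point of $\mathbb{T}$ and hence to lift along one branch of $\pi$. Note also that connectedness of $X$ plays no role, consistent with the statement of the proposition.
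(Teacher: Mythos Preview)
Your proof is correct and is essentially a careful unpacking of the paper's one-line argument (``continuously turn $d^1(x)$ counterclockwise until it hits $d^2(x)$''): your lift $\tilde e(x)\in(0,1)$ is precisely the well-defined arc length between $d^2(x)$ and $d^1(x)$, and your straight-line homotopy is the rotation the paper gestures at (up to the immaterial choice of direction). There is no substantive difference in approach.
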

\begin{proof}
Continuously turn $d^1(x)$ counterclockwise until it ``hits'' $d^2(x)$.
\end{proof}

As a corollary (of Proposition~\ref{p.homotopic} and \eqref{e.stableunstable}) we get the following simple fact that we state explicitly:

\begin{prop}\label{p.stableunstablehomotopic}
If $E \not \in \Sigma_0$, then the unstable section $d^{u}_{E}$ and the stable section $d^{s}_{E}$ of $(T,A_E)$ are homotopic.
\end{prop}

The following statement is just a convenient reformulation of the definition of the rotation number of a cocycle in the case where an invariant section exists (not necessarily a stable/unstable section of a hyperbolic cocycle, although that is the context we are most interested in):

\begin{prop}\label{p.1}
Suppose $X$ is connected and the cocycle $(T, A)$ has an invariant section $d : X \to \mathbb{T}$. Then the displacement function
$$
\varphi_{d, A}(x)=\tilde g{(x)}(y)-y,
$$
where $y\in \pi^{-1}(d(x))$, does not depend on the choice of $y\in  \pi^{-1}(d(x))$, and the rotation number $\rho(T, A)$ is given by
$$
\rho(T, A)=\int_X\varphi_{d, A}(x) \, d\mu(x) \!\!\! \mod 1.
$$
\end{prop}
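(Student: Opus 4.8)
The plan is to unwind the definition of the rotation number from Proposition~\ref{p.rotationnumberdefandprops} and show that the presence of an invariant section forces the time averages appearing there to collapse to a single integral of the displacement function. First I would verify the well-definedness of $\varphi_{d,A}(x)$: if $y, y' \in \pi^{-1}(d(x))$, then $y' = y + k$ for some $k \in \Z$, and since $\tilde g(x)$ is a lift of a circle map it satisfies $\tilde g(x)(y+k) = \tilde g(x)(y) + k$; hence $\tilde g(x)(y') - y' = \tilde g(x)(y) - y$. This also shows $\varphi_{d,A}$ is continuous on $X$ (being locally the difference of the continuous $\tilde g$ and a continuous local lift of $d$), hence bounded, so the integral $\int_X \varphi_{d,A}\, d\mu$ makes sense.

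Next I would exploit the invariance relation $g(x)(d(x)) = d(Tx)$ to get a telescoping identity for the iterated lift. Choose, for $\mu$-a.e.\ $x$, a point $y_0 \in \pi^{-1}(d(x))$, and set $y_j = \tilde G_{x, E, j}(y_0) = \tilde g(T^{j-1}x)\circ\cdots\circ\tilde g(x)(y_0)$ for $j \ge 0$, with $y_0$ as given. The key point is that $\pi(y_j) = d(T^j x)$: this follows by induction from $\pi(\tilde g(T^{j-1}x)(y_{j-1})) = g(T^{j-1}x)(\pi(y_{j-1})) = g(T^{j-1}x)(d(T^{j-1}x)) = d(T^j x)$. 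Consequently $y_{j+1} - y_j = \tilde g(T^j x)(y_j) - y_j = \varphi_{d,A}(T^j x)$ because $y_j \in \pi^{-1}(d(T^j x))$ and $\varphi_{d,A}$ is independent of the chosen preimage. Summing,
$$
\tilde G_{x,E,n}(y_0) - y_0 = \sum_{j=0}^{n-1} \varphi_{d,A}(T^j x).
$$
Dividing by $n$, the left side tends to $\rho(T,A)$ for $\mu$-a.e.\ $x$ by Proposition~\ref{p.rotationnumberdefandprops} (the $y_0$ term is negligible), while the right side converges for $\mu$-a.e.\ $x$ to $\int_X \varphi_{d,A}\, d\mu$ by Birkhoff's ergodic theorem, using ergodicity of $\mu$. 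Hence $\rho(T,A) = \int_X \varphi_{d,A}\, d\mu$; since $\rho$ is only defined modulo $1$ (because of the freedom in the choice of lift on a connected $X$), the equality is asserted mod $1$, matching the statement.

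I do not anticipate a genuine obstacle here — the argument is essentially a bookkeeping exercise — but the one point deserving care is the compatibility of the normalization of $\rho$ in Proposition~\ref{p.rotationnumberdefandprops} (defined via $\int_X \tilde G_{x,E,n}(y)/n\, d\mu(y)$, averaging over the initial point $y$) with the pointwise statement used above: one should note that the pointwise convergence $\tilde G_{x,E,n}(y)/n \to \rho(E)$ for a.e.\ $x$ and every $y$ is already part of that proposition, so no separate justification of the interchange of limit and integral is needed. A second minor point is that connectedness of $X$ is what makes the displacement function well-defined globally with a consistent lift $\tilde g$, so the hypothesis is used exactly where the proposition invokes it.
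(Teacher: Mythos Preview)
The paper does not actually give a proof of this proposition; it is introduced as ``just a convenient reformulation of the definition of the rotation number'' and left without a proof environment. Your argument supplies the routine verification the paper omits, and it is correct: the well-definedness of $\varphi_{d,A}$ via the $\Z$-equivariance of the lift, the inductive check that $\pi(y_j)=d(T^jx)$, the resulting telescoping identity $\tilde G_{x,A,n}(y_0)-y_0=\sum_{j=0}^{n-1}\varphi_{d,A}(T^jx)$, and the appeal to Birkhoff plus the pointwise convergence in Proposition~\ref{p.rotationnumberdefandprops} are exactly what is needed.

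Two cosmetic points. First, you write $\tilde G_{x,E,n}$, carrying over the Schr\"odinger-specific notation, whereas Proposition~\ref{p.1} is stated for a general cocycle $(T,A)$ homotopic to the identity; the argument goes through verbatim once you use the general lifts $\tilde g(x)$ (the paper itself is slightly informal here, since Proposition~\ref{p.rotationnumberdefandprops} is literally stated only for Schr\"odinger cocycles but the subsequent propositions use $\rho(T,A)$ for general $A$). Second, your closing remark that connectedness of $X$ is what makes $\varphi_{d,A}$ globally well defined slightly overstates things: the independence of $\varphi_{d,A}(x)$ from the choice of $y\in\pi^{-1}(d(x))$ holds for any single lift $\tilde g(x)$, connected $X$ or not; connectedness enters only to guarantee that any two continuous families of lifts differ by a \emph{single} integer constant, which is why the final formula is asserted only $\bmod\ 1$.
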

\begin{remark}
Notice that when $X$ is connected, the choice of a different family of lifts $\tilde g{(x)}$ will only change the integral $\int_X\varphi_{d, A}(x) \, d\mu(x)$ by an integer constant.
\end{remark}

\begin{prop}\label{p.3}
In the context of Proposition \ref{p.1}, suppose that another cocycle $(T, A')$ is such that it has the same $d : X \to \mathbb{T}$ as an invariant section. Then
$$
\rho(T, A)=\rho(T, A') \!\!\! \mod 1.
$$
\end{prop}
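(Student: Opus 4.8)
The plan is to compare the two displacement functions $\varphi_{d,A}$ and $\varphi_{d,A'}$ directly and show that they differ by an integer constant. First I would record the key consequence of the hypothesis: since $d$ is an invariant section of both $(T,A)$ and $(T,A')$, the projectivized maps satisfy $g(x)(d(x)) = d(Tx) = g'(x)(d(x))$ for every $x \in X$. Fix continuous families of lifts $\tilde g(x)$ and $\tilde g'(x)$ (these exist because both cocycles are homotopic to the identity, and, since $X$ is connected, each is unique up to a global additive integer). Given $x \in X$, pick any $y \in \pi^{-1}(d(x))$; then both $\tilde g(x)(y)$ and $\tilde g'(x)(y)$ project under $\pi$ to $d(Tx)$, hence differ by an integer. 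Therefore
$$
\varphi_{d,A}(x) - \varphi_{d,A'}(x) = \tilde g(x)(y) - \tilde g'(x)(y) \in \Z \quad \text{for every } x \in X.
$$

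Next I would argue that this integer does not depend on $x$. By Proposition~\ref{p.1} each displacement function is a well-defined real-valued function on $X$, independent of the choice of $y$ in the fiber, and it is continuous: lifting $d$ locally to a continuous section $x \mapsto y(x)$ of $\pi$, the formula $\varphi_{d,A}(x) = \tilde g(x)(y(x)) - y(x)$ exhibits local continuity, and since the value is independent of the lift chosen, $\varphi_{d,A}$ is continuous on all of $X$; likewise for $\varphi_{d,A'}$. Their difference is thus a continuous, integer-valued function on the connected space $X$, hence a constant $k \in \Z$.

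Finally, integrating against $\mu$ and invoking Proposition~\ref{p.1} gives
$$
\rho(T,A) - \rho(T,A') = \int_X \varphi_{d,A} \, d\mu - \int_X \varphi_{d,A'} \, d\mu = k \equiv 0 \pmod{1},
$$
which is the assertion. The only delicate point is the bookkeeping of the various integer ambiguities — the global constant in the choice of the lift families when $X$ is connected, and the fiberwise $\Z$-ambiguity in the choice of $y$ — but all of these are absorbed by the $\mathrm{mod}\ 1$ in the statement and none obstructs the argument; the substantive input is simply that $g(x)$ and $g'(x)$ agree at the point $d(x)$, which forces their lifts to agree there up to an integer.
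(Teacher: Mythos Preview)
Your proof is correct and follows exactly the same approach as the paper: observe that $\varphi_{d,A}-\varphi_{d,A'}$ is a continuous integer-valued function on the connected space $X$, hence constant, and absorb that constant into the $\bmod\ 1$. The paper's version is just a two-line sketch of the same argument; your write-up supplies the details it elides.
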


\begin{proof}
Notice that $\varphi_{d, A}(x) - \varphi_{d, A'}(x)$ is a continuous integer-valued function. Since $X$ is connected, the result follows.
\end{proof}

\begin{prop}\label{p.2}
Suppose that $X$ is connected, $(T, A)$ is an $\mathrm{SL}(2, \mathbb{R})$ cocycle that is homotopic to the identity,  $\alpha : X \to \mathbb{R}$ is continuous, and the cocycle $(T, C)$ is given by $C(x) = R_{-\alpha(T(x))}A(x)R_{\alpha(x)}$,
where $R_\theta \in \mathrm{SL}(2, \mathbb{R})$ is the rotation in $\R^2$ by angle $\theta$.
Then $\rho(T, C) = \rho(T, A) \!\! \mod 1$.
\end{prop}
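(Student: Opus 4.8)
The plan is to realize $(T,C)$ as a rotation–conjugate of $(T,A)$ and to observe that conjugating the lifted circle cocycle by a \emph{bounded} family of translations changes the Birkhoff average defining the rotation number by an $o(n)$ amount, hence not at all in the limit. First I would check that $\rho(T,C)$ is defined: if $s\mapsto A_s$ is a continuous path in $C(X,\SL(2,\R))$ with $A_0\equiv I$ and $A_1=A$, then $C_s(x):=R_{-s\alpha(T(x))}A_s(x)R_{s\alpha(x)}$ joins the constant identity cocycle to $(T,C)$, so $(T,C)$ is homotopic to the identity and, $X$ being connected, its rotation number is well defined modulo $1$.

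Next I would pass to the projective picture. Write $g(x),h(x):\T\to\T$ for the projective actions of $A(x)$ and $C(x)$ on $\mathbb{RP}^1\cong\T$, and fix lifts $\tilde g(x):\R\to\R$ depending continuously on $x$. The rotation $R_\theta$ acts on $\mathbb{RP}^1$ as a rigid rotation, with translation lift $\tilde\rho_\theta(y)=y+c\theta$ for a fixed constant $c$; since $\alpha$ is a genuine continuous real-valued function, $x\mapsto\tilde\rho_{\alpha(x)}$ is an unambiguous continuous family of lifts. From $C(x)=R_{-\alpha(T(x))}A(x)R_{\alpha(x)}$ it follows that $\tilde h(x):=\tilde\rho_{-\alpha(T(x))}\circ\tilde g(x)\circ\tilde\rho_{\alpha(x)}$ is a continuous family of lifts of $h(x)$, and, composing along an orbit, the interior rotations cancel in consecutive pairs, so that
$$
\tilde h(T^{n-1}x)\circ\cdots\circ\tilde h(x)=\tilde\rho_{-\alpha(T^n x)}\circ\big(\tilde g(T^{n-1}x)\circ\cdots\circ\tilde g(x)\big)\circ\tilde\rho_{\alpha(x)}=\tilde\rho_{-\alpha(T^n x)}\circ\tilde G_{x,n}\circ\tilde\rho_{\alpha(x)},
$$
where $\tilde G_{x,n}$ is the corresponding iterate of the lifts for $(T,A)$.

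The one genuine estimate is then the following: with $M:=\sup_X|\alpha|<\infty$, the maps $\tilde\rho_{\pm\alpha(\cdot)}$ displace points by at most $cM$, while $\tilde G_{x,n}$ is nondecreasing and commutes with integer translations (it is a lift of an orientation-preserving circle homeomorphism); hence
$$
\big|\tilde h(T^{n-1}x)\circ\cdots\circ\tilde h(x)(y)-\tilde G_{x,n}(y)\big|\le 2cM+1\qquad\text{for all }n,\ x,\ y,
$$
a bound independent of $n$. Dividing by $n$ and letting $n\to\infty$, the limit defining $\rho(T,C)$ (cf.\ Proposition~\ref{p.rotationnumberdefandprops}, applied pointwise $\mu$-a.e.\ in $x$ or after integrating in $x$) agrees with the one defining $\rho(T,A)$, giving $\rho(T,C)=\rho(T,A)\bmod 1$.

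I expect the only delicate point to be the bookkeeping of lifts: because $X$ is connected each rotation number is a priori defined only modulo $1$, so one should state explicitly that the specific choices above ($\tilde g$ any continuous-in-$x$ lift of $g$, $\tilde\rho_{\alpha(x)}$ the canonical translation lift) do produce honest continuous-in-$x$ lifts of $h$, which is exactly what makes the telescoping identity exact and the conclusion an equality modulo $1$. (When $(T,A)$ happens to possess an invariant section $d$, one could instead observe that $x\mapsto\tilde\rho_{-\alpha(x)}(d(x))$ is an invariant section of $(T,C)$ and invoke Proposition~\ref{p.3}, but the argument above avoids assuming existence of such a section.)
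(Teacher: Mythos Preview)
Your proof is correct and follows essentially the same route as the paper: choose continuous lifts, observe the telescoping identity $\tilde G_{x,C,n}(y)=\tilde G_{x,A,n}(y+c\alpha(x))-c\alpha(T^n x)$, and use boundedness of $\alpha$ to conclude the $\tfrac1n$-limits coincide. Your version is a bit more explicit (you verify that $(T,C)$ is homotopic to the identity and spell out the uniform bound via monotonicity and $\Z$-equivariance of the lift), but the substance is identical.
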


\begin{proof}
Denote by $\{g_A(x)\}$ the family of projectivizations of $\{A(x)\}$, and let
$\{\tilde g_{A}(x) \}$ be a continuous family of lifts of the maps $\{g_A(x)\}$; such a family of lifts exists since the cocycle $(T, A)$ is homotopic to the identity. Similarly to the notation introduced above for the Schr\"odinger cocycles, set $\tilde G_{x, A, n}(y)=\tilde g_A{(T^{n-1}(x))}\circ \ldots \circ \tilde g_A{(x)}$.  Then
$$
\tilde g_{C}(x)(y)=\tilde g_{A}(x)(y+\alpha(x)) - \alpha(T(x))
$$
form a family of lifts of projectivizations of $\{C(x)\}$, and
\begin{align*}
\tilde G_{x, C, n}(y) & = \tilde g_{T^{n-1}(x), C}\circ \ldots \circ \tilde g_{x, C}(y) \\
& =\tilde g_{T^{n-1}(x), A}\circ \ldots \circ \tilde g_{x, A}(y+\alpha(x))-\alpha(T^n(x)).
\end{align*}
Hence, for $\mu$-regular $x\in X$, we have
$$
\rho(T, C)=\lim_{n\to \infty}\frac{\tilde G_{x, C, n}(y)}{n}=\lim_{n\to \infty}\frac{\tilde G_{x, A, n}(y)}{n}=\rho(T, A),
$$
concluding the proof.
\end{proof}

Together, Propositions \ref{p.2} and \ref{p.3} imply the following:

\begin{prop}\label{p.homotopicsamegap}
Assume that $X$ is connected. If the unstable sections $d^{u}_{E_1}$ and $d^{u}_{E_2}$ are homotopic, then $\rho(E_1)=\rho(E_2)\!\! \mod 1$.
\end{prop}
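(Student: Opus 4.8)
The plan is to reduce the statement to Proposition~\ref{p.3} by producing, from the two hyperbolic cocycles $(T,A_{E_1})$ and $(T,A_{E_2})$, two cocycles that literally share an invariant section and whose rotation numbers agree (mod $1$) with $\rho(E_1)$ and $\rho(E_2)$ respectively. The bridge is Proposition~\ref{p.2}: conjugating a cocycle by a continuous family of rotations $R_{\alpha(x)}$ does not change the rotation number (mod $1$), but it \emph{does} move an invariant section $d$ to the section $x \mapsto d(x) - \alpha(x)$ (in the $\T$-coordinate), since the projectivization of $R_{-\alpha(T x)} A(x) R_{\alpha(x)}$ is the $\T$-map $\theta \mapsto g_A(x)(\theta + \alpha(x)) - \alpha(T x)$.

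First I would record that, by hypothesis, $d^u_{E_1}$ and $d^u_{E_2}$ are homotopic maps $X \to \T$, so there is a continuous homotopy between them; lifting it, there is a continuous $\alpha : X \to \R$ with $\pi(\alpha(x)) = d^u_{E_2}(x) - d^u_{E_1}(x)$ for all $x$ (using connectedness of $X$ to get a single-valued continuous lift of the difference). Next, apply Proposition~\ref{p.2} to $A = A_{E_1}$ with this $\alpha$: the resulting cocycle $(T,C)$, with $C(x) = R_{-\alpha(T x)} A_{E_1}(x) R_{\alpha(x)}$, satisfies $\rho(T,C) = \rho(T,A_{E_1}) = \rho(E_1) \bmod 1$. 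Moreover $(T,C)$ inherits a continuous invariant section from $(T,A_{E_1})$: since $d^u_{E_1}$ is invariant for $(T,A_{E_1})$, the map $x \mapsto d^u_{E_1}(x) + \alpha(x) \bmod 1$ — wait, I need to check the direction of the shift. The projectivization of $C(x)$ sends $\theta$ to $g_{E_1}(x)(\theta + \alpha(x)) - \alpha(Tx)$; so for $\theta$ to be $C$-invariant we need $\theta + \alpha(x)$ to be sent by $g_{E_1}(x)$ to $d^u_{E_1}(Tx)$ with the $-\alpha(Tx)$ correction landing back at the invariant value, i.e. the invariant section of $(T,C)$ is $x \mapsto d^u_{E_1}(x) - \alpha(x) = d^u_{E_1}(x) - (d^u_{E_2}(x) - d^u_{E_1}(x))$; that is not obviously $d^u_{E_2}$. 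I will instead choose $\alpha$ with $\pi(\alpha(x)) = d^u_{E_1}(x) - d^u_{E_2}(x)$, so that the invariant section of $(T,C)$ becomes exactly $d^u_{E_2}$.

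Having arranged that $(T,C)$ has $d^u_{E_2}$ as a continuous invariant section, the same is true of $(T, A_{E_2})$ by definition of the unstable section. Now Proposition~\ref{p.3} applies to the pair $(T,C)$ and $(T,A_{E_2})$: they share the invariant section $d^u_{E_2}$, hence $\rho(T,C) = \rho(T,A_{E_2}) \bmod 1$. Combining, $\rho(E_1) = \rho(T,C) = \rho(E_2) \bmod 1$, which is the claim.

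The main obstacle is bookkeeping rather than depth: getting the sign and composition conventions for the projectivized rotation conjugacy consistent with the $\T$-coordinate identification of $\RP^1$, so that the shifted section really is $d^u_{E_2}$ and not some other section; and confirming that the conjugated section remains genuinely continuous and $\T$-valued (immediate, since it is the pointwise sum of continuous maps modulo $1$). One should also note that $(T,A_{E_1})$ is homotopic to the identity, which is needed to invoke Proposition~\ref{p.2}; this holds because every Schrödinger cocycle is homotopic to the identity, as already used in the preliminaries. No new analytic input is required — the whole argument is a formal consequence of Propositions~\ref{p.2} and \ref{p.3} once the section is transported correctly.
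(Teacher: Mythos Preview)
Your argument is correct and follows essentially the same route as the paper's proof: conjugate $(T,A_{E_1})$ by the rotation family $R_{\alpha(x)}$ (with $\alpha$ a continuous lift of $d^u_{E_1}-d^u_{E_2}$, available because the sections are homotopic and $X$ is connected) so that the resulting cocycle has $d^u_{E_2}$ as invariant section, then invoke Proposition~\ref{p.2} to match its rotation number with $\rho(E_1)$ and Proposition~\ref{p.3} to match it with $\rho(E_2)$. Your version is in fact more explicit than the paper's (which compresses the conjugation step into a single sentence), and your sign correction is right.
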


\begin{proof}
If $d^{u}_{E_1}$ and $d^{u}_{E_2}$ are homotopic, the cocycle $(T, A_{E_1})$ is conjugate to a cocycle for which the section $d^{u}_{E_2}$ is invariant (and which, due to Proposition~\ref{p.3}, has the same rotation number). On the other hand, due to Proposition~\ref{p.2} it also must have the same rotation number as $(T, A_{E_2})$.
\end{proof}

\begin{proof}[Proof of Proposition~\ref{p.keyprop}]
Given $E_1, E_2 \in \R$ with $E_1 < E_2$ so that the cocycles $(T, A_{E_1})$ and $(T, A_{E_2})$ are uniformly hyperbolic, it follows from \eqref{e.johnsonstheorem} that $E_1$ and $E_2$ belong to the complement of $\Sigma_0$. If they belong to the same connected component of $\Sigma_0^c \cap \R$ (i.e., the same gap), then we have $[E_1, E_2] \cap \Sigma_0=\emptyset$. If $E_1 < \min \Sigma_0$ and $E_2 > \max \Sigma_0$, then $\Sigma_0\subset (E_1, E_2)$. Otherwise, we must have that $E_1$ and $E_2$ belong to different gaps of $\Sigma_0$, one of which is bounded. In this case, it follows from Proposition~\ref{p.rotationnumberdefandprops} and Proposition~\ref{p.homotopicsamegap} that the unstable sections $d^{u}_{E_1}$ and $d^{u}_{E_2}$ are not homotopic.
\end{proof}

\section{Proof of the Main Result}\label{sec.proof}

In this section we prove Theorem~\ref{t.main}. We will denote the complement of a subset $\mathcal{S}$ of $\R$ by $\mathcal{S}^c$, that is, $\mathcal{S}^c = \R \setminus \mathcal{S}$. We emphasize this because sometimes spectra are naturally considered as subsets of $\C$, but for the discussion below it is not necessary to move off the real axis.

Recall that $\Sigma_0$ denotes the unperturbed spectrum, compare \eqref{e.spectrumunpert}, and $\Sigma_1$ denotes the almost sure spectrum after adding the random perturbation, compare \eqref{e.spectrumpert}. Recall also that $S$ denotes the topological support of the single-site measure generating the random perturbation; see \eqref{e.supportssm}.

Before we start the formal proof, we would like to remind the reader that the addition of a constant to the potential is equivalent to a shift in the energy. Therefore, if one takes an energy in $\Sigma_0$, the addition of a constant that belongs to $S$ must give us an energy in  $\Sigma_1$. It is now key to our argument to observe the following. Start with the Schr\"odinger cocycle $(T, A_E)$ associated with the unperturbed ergodic potential, compare \eqref{e.cocycle1}--\eqref{e.cocycle2}, and suppose that there are two constants from the support $S$ of the random perturbation such that adding any one of them to an energy outside of $\Sigma_0$ produces a uniformly hyperbolic cocycle and also such that the unstable sections of these two cocycles are not homotopic. Then there is a point in the phase space where the stable direction of one cocycle coincides with the unstable direction of the other. Consider the potential generated by the $T$-orbit of that point. Then adding one constant to all values of the potential on the right half line and another constant to the potential on the left half line gives a sequence of matrices that are hyperbolic on each of the half lines, but such that the most contracting vector of the products of matrices to the right coincides with the most contracting vector to the left, hence its images form an eigenfunction, and therefore the energy in question must be in $\Sigma_1$.

We present the formal argument in the proof of the following statement:

\begin{lemma}\label{l.main1}
If $E \in \Sigma_1^c$, then $E - S \subseteq \Sigma_0^c$ and all $E' \in E - S$ have homotopic unstable sections with respect to the unperturbed cocycle at energy $E'$.
\end{lemma}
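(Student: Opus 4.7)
The plan is to handle the two conclusions separately. For $E - S \subseteq \Sigma_0^c$, fix any $s \in S$. The constant sequence $\omega \equiv s$ belongs to $S^\Z$, and for this realization $W_\omega \equiv s$, so $H_x + W_\omega = H_x + s I$ has spectrum $\sigma(H_x) + s$. Choosing any $x$ with $\sigma(H_x) = \Sigma_0$ and using \eqref{e.spectrumpert} yields $\Sigma_0 + s \subseteq \Sigma_1$. Thus $E \in \Sigma_1^c$ forces $E - s \in \Sigma_0^c$ for every $s \in S$.

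For the homotopy statement I argue by contradiction. The first conclusion ensures that $(T, A_{E-s})$ is uniformly hyperbolic for every $s \in S$, so the unstable sections $d^u_{E-s}$ are all defined. Suppose that for some $s_1, s_2 \in S$ the sections $d^u_{E_1}$ and $d^u_{E_2}$ (with $E_i \defeq E - s_i$) are not homotopic. By Proposition~\ref{p.stableunstablehomotopic} applied at $E_2$, $d^u_{E_2}$ is homotopic to $d^s_{E_2}$, hence $d^u_{E_1}$ and $d^s_{E_2}$ are also not homotopic. The contrapositive of Proposition~\ref{p.homotopic} then produces a point $x_0 \in X$ with $d^u_{E_1}(x_0) = d^s_{E_2}(x_0)$.

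Using this single point of stable/unstable coincidence, the plan is to build an $\ell^2$ eigenfunction at energy $E$ for a perturbed operator indexed by a point of $X \times S^\Z$. Define $\omega_0 \in S^\Z$ by $\omega_0(n) = s_2$ for $n \ge 0$ and $\omega_0(n) = s_1$ for $n < 0$. The one-step transfer matrix of the formal eigenvalue equation $(H_{x_0} + W_{\omega_0})\psi = E \psi$ at site $n$ is then $A_{E_2}(T^n x_0)$ for $n \ge 0$ and $A_{E_1}(T^n x_0)$ for $n < 0$. Pick any non-zero $v$ representing the common direction $d^u_{E_1}(x_0) = d^s_{E_2}(x_0)$, set $(\psi(0),\psi(-1))^T = v$, and propagate $\psi$ to all of $\Z$ by the three-term recurrence. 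Uniform hyperbolicity of $(T, A_{E_2})$ together with $v \in d^s_{E_2}(x_0)$ gives exponential decay of $\psi(n)$ as $n \to +\infty$; uniform hyperbolicity of $(T, A_{E_1})$ together with $v \in d^u_{E_1}(x_0)$, which contracts under backward iteration, gives exponential decay as $n \to -\infty$. Hence $\psi \in \ell^2(\Z)$ is an eigenfunction of $H_{x_0} + W_{\omega_0}$ with eigenvalue $E$, so $E \in \sigma(H_{x_0} + W_{\omega_0}) \subseteq \Sigma_1$ by \eqref{e.spectrumpert}, contradicting $E \in \Sigma_1^c$.

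The conceptual heart of the argument, and the only step that really requires care, is the ``stable/unstable swap'' in the second paragraph: the hypothesis gives non-homotopy of two \emph{unstable} sections, whereas producing a decaying eigenfunction on both half-lines requires a \emph{stable--unstable} coincidence. Proposition~\ref{p.stableunstablehomotopic} is precisely what bridges this gap, and without it the argument would not close. The remainder is routine bookkeeping: identifying the transfer matrices on each half-line, and observing that $\omega_0 \in S^\Z$, so $(x_0, \omega_0)$ is a realization to which \eqref{e.spectrumpert} applies.
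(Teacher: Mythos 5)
Your proof is correct and follows essentially the same strategy as the paper: a constant-$S$ realization to handle $E - S \subseteq \Sigma_0^c$, and a half-line splitting at a stable--unstable coincidence point (found via Propositions~\ref{p.stableunstablehomotopic} and~\ref{p.homotopic}) to produce an $\ell^2$ eigenfunction when the unstable sections are non-homotopic. The bookkeeping of transfer matrices, the direction-matching at $x_0$, and the appeal to \eqref{e.spectrumpert} all match the paper's argument.
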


\begin{proof}
We show the contrapositive. That is, if $E - S \not\subseteq \Sigma_0^c$ or if $E - S \subseteq \Sigma_0^c$ and there are $v, v' \in S$ that have non-homotopic unstable sections at energies $E - v$ and $E - v'$ with respect to the respective unperturbed cocycles, then $E \in \Sigma_1$.

Consider first the case $E - S \not\subseteq \Sigma_0^c$. Then, there is $v \in S$ such that $E - v \in \Sigma_0$. This shows that the constant realization $W_\omega \equiv v$ is such that $E \in \sigma(H + V_x + W_\omega)$ for every $x \in X_0$ with $\sigma(H_x) = \Sigma_0$. Since this set of $x$'s has full $\mu$ measure, and translates of $\tilde \mu$-almost every $\tilde \omega$ can approximate $\omega$, a strong approximation argument then implies that $E \in \Sigma_1$, as desired.

In the other case, $E - S \subseteq \Sigma_0^c$ and there are $v, v' \in S$ such that $E- v$ and $E - v'$ have non-homotopic unstable sections with respect to the respective unperturbed cocycles. Consider the random realization
$$
W_\omega(n) = \begin{cases} v & n \in \Z_-, \\ v' & n \in \Z_+. \end{cases}
$$
Since the stable and unstable sections of the unperturbed cocycle for fixed energy are homotopic by Proposition~\ref{p.stableunstablehomotopic}, by assumption we have that the unstable section for energy $E - v$ and the stable section for energy $E - v'$ are non-homotopic (they exist due to $E - S \subseteq \Sigma_0^c$). By Proposition~\ref{p.homotopic} there exists $x \in X$ such that $d^u_{E-v}(x) = d^s_{E-v'}(x)$. This shows that the Schr\"odinger operator with potential $V_x + W_\omega$ for these particular choices for $x$ and $\omega$ possesses an exponentially localized eigenvector at energy $E$. Thus, we have $E \in \sigma(H_x + W_\omega)$, and hence by \eqref{e.specinclusion}, we have $E \in \Sigma_1$.
\end{proof}

\begin{lemma}\label{l.main}
Assume that $X$ is connected. Then, $E \in \Sigma_1^c$ if and only if either $E - S$ is contained in a gap of $\Sigma_0$ or $\Sigma_0$ is contained in a gap of $E - S$.
\end{lemma}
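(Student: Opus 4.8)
The plan is to prove the two implications separately: the forward one from Lemma~\ref{l.main1} together with Proposition~\ref{p.keyprop}, and the converse via an exponential dichotomy / common invariant cone field argument.

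For the forward direction, suppose $E \in \Sigma_1^c$. Lemma~\ref{l.main1} gives $E - S \subseteq \Sigma_0^c$ and pairwise homotopic unstable sections over $E - S$. Put $E_1 = E - \sup S$ and $E_2 = E - \inf S$; since $\# S \ge 2$ and $S$ is compact these are distinct points of $E - S$, so $(T, A_{E_1})$ and $(T, A_{E_2})$ are uniformly hyperbolic by \eqref{e.johnsonstheorem} and their unstable sections are homotopic. Proposition~\ref{p.keyprop} then forces one of its first two alternatives. If $[E_1, E_2] \cap \Sigma_0 = \emptyset$, then $E - S \subseteq [E_1, E_2]$ lies inside one connected component of $\Sigma_0^c$, i.e.\ in a gap of $\Sigma_0$. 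If instead $\Sigma_0 \subset (E_1, E_2)$, I would first rule out that $\Sigma_0$ straddles a point of $E - S$: were there some $p = E - v \in E - S$ with points of $\Sigma_0$ on both sides, applying Proposition~\ref{p.keyprop} to the uniformly hyperbolic pair $E_1 < p$ (whose unstable sections are again homotopic by Lemma~\ref{l.main1}) would contradict both of its first two alternatives. Hence $\Sigma_0 \cap (E - S) = \emptyset$ together with this non-straddling shows that $\Sigma_0$ lies in a single bounded component of $(E - S)^c$, i.e.\ in a gap of $E - S$.

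For the converse, assume $E - S$ lies in a gap of $\Sigma_0$, or $\Sigma_0$ lies in a gap of $E - S$. By \eqref{e.spectrumpert} it suffices to show $E \notin \sigma(H_x + W_\omega)$ for every $(x, \omega) \in X \times S^\Z$. For such a pair, the difference equation $(H_x + W_\omega)\psi = E\psi$ has transfer matrices $A_{E - \omega_n}(T^n x)$, whose ``local energies'' $E - \omega_n$ all belong to $E - S \subseteq \Sigma_0^c$; under our hypothesis they lie either in a single gap of $\Sigma_0$, or (in the remaining case, $\Sigma_0 \subset$ a bounded gap of $E - S$) in the two unbounded gaps of $\Sigma_0$, one on each side of $\Sigma_0$. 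In either case I would produce a continuous cone field $y \mapsto \mathcal{C}(y)$ on $X$ that is strictly invariant and uniformly contracting under $A_{E-s}(y)$ for \emph{every} $s \in S$ simultaneously, using: uniform hyperbolicity of each $(T, A_{E-s})$; continuity and monotonicity of the stable/unstable sections $d^s_{E'}, d^u_{E'}$ as $E'$ varies within a gap; and the fact that the relevant unstable (resp.\ stable) sections are pairwise homotopic --- automatic within one gap, and for the two unbounded gaps because $d^u_{E'}$ converges uniformly, as $E' \to \pm\infty$, to the constant section given by the direction of $\begin{pmatrix} 1 \\ 0 \end{pmatrix}$, so that all of them are null-homotopic. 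Once such a common cone field exists, every composition of the matrices $A_{E - \omega_n}(T^n x)$ preserves it, yielding an exponential dichotomy on $\Z$ for the transfer cocycle at energy $E$, so $E$ lies in the resolvent set of $H_x + W_\omega$.

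I expect the construction of the common invariant cone field to be the main obstacle. Uniform hyperbolicity of each individual cocycle $(T, A_{E-s})$ does not by itself give hyperbolicity for the mixed products coming from an arbitrary $\omega \in S^\Z$; one genuinely needs the monotonicity of Schr\"odinger cocycles in the energy, together with the homotopy information, to ensure that the unstable directions for all admissible local energies sweep out a single arc staying uniformly away from the corresponding arc of stable directions. The case in which the local energies are spread over the two unbounded gaps of $\Sigma_0$ --- which occurs precisely when $\Sigma_0$ fits in a bounded gap of $E - S$, i.e.\ when $\diam S > \diam \Sigma_0$ --- is the one requiring the most care.
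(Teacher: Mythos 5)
Your forward direction is correct and uses the same ingredients as the paper's (Lemma~\ref{l.main1} plus Proposition~\ref{p.keyprop}); the case split via $E_1 = E - \sup S$, $E_2 = E - \inf S$ and the non-straddling argument is a slightly different packaging of the paper's contrapositive, but the content is identical.

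The converse is where you diverge, and where there is a genuine gap. The paper's reverse direction is a two-line operator-norm argument: $\sigma(H_x + W_\omega) \subseteq \Sigma_0 + \mathrm{ch}(S)$ handles the case ``$E-S$ in a gap of $\Sigma_0$,'' and (after normalizing so that $\mathrm{ch}(\Sigma_0) = [-r,r] $ with $r = \|H_x\|$) the bound $\sigma(H_x + W_\omega) \subseteq S + [-r,r]$ handles the case ``$\Sigma_0$ in a gap of $E-S$.'' Your proposed route via a common invariant cone field is not merely longer but does not close. The issue is that a cone field bounded by an arc of unstable directions on one side and an arc of stable directions on the other requires that $d^u_{E'}(y)$ and $d^s_{E''}(y)$ be \emph{disjoint} for all admissible local energies $E', E''$ and all $y \in X$; homotopy (or null-homotopy) of the sections does not give disjointness, since two homotopic circle-valued maps can certainly intersect. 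In the case $\Sigma_0 \subset$ a bounded gap of $E - S$, your disjointness argument invokes the $E' \to \pm\infty$ asymptotics ($d^u_{E'} \to$ direction of $(1,0)$), but the local energies $E - \omega_n$ only need to lie outside $\Sigma_0$ and can be arbitrarily close to its endpoints, so those asymptotics are unavailable. Worse, the required disjointness statement ($d^u_{E-v}(y) \ne d^s_{E-v'}(y)$ for all $y, v, v'$) is, via your own Lemma~\ref{l.main1} mechanism, essentially \emph{equivalent} to the conclusion $E \notin \Sigma_1$ one is trying to establish --- so proving it by an independent monotonicity argument would amount to a second, harder proof of the lemma. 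The norm-bound route sidesteps all of this and is what the paper actually uses.
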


\begin{proof}
For the first direction we suppose that $E \in \R$ is such that neither $E - S$ is contained in a gap of $\Sigma_0$, nor $\Sigma_0$ is contained in a gap of $E - S$. We need to show that $E \in \Sigma_1$.

One possibility is that $E - S$ intersects $\Sigma_0$. By the argument given above in the proof of Lemma~\ref{l.main1}, it follows that $E \in \Sigma_1$, as desired.

The other possibility is that neither set is contained in a gap of the other, but they still have empty intersection. In this case one can find $v,v' \in S$ such that $E - v$ and $E - v'$ belong to different gaps of $\Sigma_0$, one of which must be an interior (i.e., bounded) gap. Proposition~\ref{p.keyprop} now shows that the unstable sections at these two energies are non-homotopic. Thus, by Lemma~\ref{l.main1} we find $E \in \Sigma_1$, again as desired.

\medskip

For the reverse direction we suppose that $E \in \R$ is such that either $E - S$ is contained in a gap of $\Sigma_0$, or $\Sigma_0$ is contained in a gap of $E - S$. We need to show that $E \in \Sigma_1^c$. 
It is clear that $\Sigma_1 \subseteq \mathrm{ch}(S)+\Sigma_0$. Thus, if $E - S$ is contained in a gap of $\Sigma_0$, then $E - \mathrm{ch}(S)$ is also contained in a gap of $\Sigma_0$, and hence $E$ cannot be in $\mathrm{ch}(S)+\Sigma_0$, so $E\not\in \Sigma_1$.

To see the other implication, we can assume without loss of generality that $r := \max \Sigma_0 = - \min \Sigma_0$ (otherwise shift appropriately and subsume the necessary translate in $S$). By self-adjointness, we therefore must have
\begin{equation}\label{e.Hx}
\|H_x\| = r \ \text{ for}\ \ \mu-\text{almost every}\ \ x \in X.
\end{equation}
Arguing in a similar way as before, the addition of $H_x$ to $W_\omega$ can shift the edge of a spectral gap by no more than $r$ (for $\mu$-almost every $x \in X_0$). Since $S$ is the spectrum of the multiplication operator $W_\omega$ for $\tilde \mu$-almost every $\omega \in \Omega$, it follows that $E \in \Sigma_1^c$ if $\Sigma_0$ is contained in a gap of $E - S$. Indeed, if $\Sigma_0$ is contained in a gap of $E-S$, then $\mathrm{ch}(\Sigma_0)$ is contained in the same gap of $E - S$ as well, which implies that
$$
E \not \in S + \mathrm{ch}(\Sigma_0) = S + [-r, r].
$$
Due to (\ref{e.Hx}), this implies that $E \in \Sigma_1^c$.
\end{proof}


\begin{proof}[Proof of Theorem~\ref{t.main}]
Consider first the case where $\diam(S) \le \diam(\Sigma_0)$. Then, by Lemma~\ref{l.main}, $E \in \Sigma_1^c$ if and only if $E - S$ is contained in a gap of $\Sigma_0$ (as the other case is impossible). But this in turn is equivalent to the statement that $E - \mathrm{ch}(S)$ is contained in a gap of $\Sigma_0$. It follows that an $E \in \R$ obeys $E \notin \Sigma_1$ if and only if $E \notin \Sigma_0 + \mathrm{ch}(S)$, whence $\Sigma_1 = \Sigma_0 \bigstar S$ in this case.

In the case where $\diam(S) > \diam(\Sigma_0)$, we argue similarly. By Lemma~\ref{l.main}, $E \in \Sigma_1^c$ if and only if $\Sigma_0$ is contained in a gap of $E - S$. This in turn is equivalent to the statement that $\mathrm{ch}(\Sigma_0)$ is contained in a gap of $E - S$. It follows that an $E \in \R$ obeys $E \notin \Sigma_1$ if and only if $E \notin \mathrm{ch}(\Sigma_0) + S$, whence $\Sigma_1 = \Sigma_0 \bigstar S$ in this case as well.
\end{proof}


\end{document}